     \newcommand\Cay{\mathrm{Cay}}
\newcommand\Q{\mathrm{Q}}
\newtheorem{theorem}{Theorem}[section]
\newtheorem{lemma}[theorem]{Lemma}
\newtheorem{corollary}[theorem]{Corollary}
\theoremstyle{definition}
\newtheorem{question}[theorem]{Question}
\newtheorem{definition}[theorem]{Definition}
\newtheorem{construction}[theorem]{Construction}
\newtheorem{example}[theorem]{Example}
\newtheorem{remark}[theorem]{Remark}
\newcommand{\bmat}[1]{\begin{bmatrix}#1\end{bmatrix}}
\begin{document}
\openup 0.5\jot

\title[Regular sets in Cayley graphs]{Regular sets in Cayley graphs}

\author[Wang]{Yanpeng Wang}
\thanks{Corresponding author: Yanpeng Wang}
\address{Rongcheng Campus\\Harbin University of Science and Technology\\Harbin, Heilongjiang 150080\\People's Republic of China}
\email{wangyanpeng@pku.edu.cn}

\author[Xia]{Binzhou Xia}
\address{School of Mathematics and Statistics\\The University of Melbourne\\Parkville, VIC 3010\\Australia}
\email{binzhoux@unimelb.edu.au}

\author[Zhou]{Sanming Zhou}
\address{School of Mathematics and Statistics\\The University of Melbourne\\Parkville, VIC 3010\\Australia}
\email{sanming@unimelb.edu.au}
\maketitle
\begin{abstract}
In a graph $\Gamma$ with vertex set $V$, a subset $C$ of $V$ is called an $(a,b)$-regular set if every vertex in $C$ has exactly $a$ neighbors in $C$ and every vertex in $V\setminus C$ has exactly $b$ neighbors in $C$, where $a$ and $b$ are nonnegative integers. In the literature $(0,1)$-regular sets are known as perfect codes and $(1,1)$-regular sets are known as total perfect codes. In this paper we prove that, for any finite group $G$, if a non-trivial normal subgroup $H$ of $G$ is a perfect code in some Cayley graph of $G$, then for any pair of integers $a$ and $b$ with $0\leqslant a\leqslant|H|-1$ and $0\leqslant b\leqslant |H|$ such that $\gcd(2,|H|-1)$ divides $a$, $H$ is also an $(a,b)$-regular set in some Cayley graph of $G$ depending on $(a, b)$. A similar result involving total perfect codes is also proved in the paper.

\smallskip
\textit{Key words:} Cayley graph; regular set; perfect code; equitable partition

\smallskip
\textit{Mathematics Subject Classification 2010:} 05C25, 05E18, 94B25

\end{abstract}

\section{Introduction}

All groups considered in the paper are finite, and all graphs considered are finite and undirected with no loops or parallel edges. Let $\Gamma$ be a graph with vertex set $V$. For a vertex $v$ of $\Gamma$, denote by $\Gamma(v)$ the \emph{neighborhood} of $v$ in $\Gamma$, where two vertices are called \emph{neighbors} of each other if they are adjacent in the graph. A subset $C$ of $V$ is called a \emph{perfect code}~\cite{Kratochvil1986} in $\Gamma$ if every vertex of $\Gamma$ is at distance no more than one to exactly one vertex of $C$ (in particular, $C$ is an independent set of $\Gamma$).
A subset $C$ of $V$ is said to be a \emph{total perfect code}~\cite{Zhou2016} in $\Gamma$ if every vertex of $\Gamma$ has exactly one neighbor in $C$ (in particular, $C$ induces a matching in $\Gamma$ and so $|C|$ is even). In the literature a perfect code is also called an \emph{efficient dominating set}~\cite{DS2003} or \emph{independent perfect dominating set}~\cite{Lee2001}, and a total perfect code is also called an \emph{efficient open dominating set}~\cite{HHS1998}.


As a generalization of perfect and total perfect codes in a graph, the following notion was studied in, for example, \cite{BCGG2019, RCZ2018, Cardoso2019}.

\begin{definition}
\label{def:ab}
Let $\Gamma$ be a graph with vertex set $V$, and let $a$ and $b$ be nonnegative integers. A nonempty proper subset $C$ of $V$ is called an $(a,b)$-\emph{regular set} in $\Gamma$ if $|\Gamma(v)\cap C|=a$ for each $v\in C$ and $|\Gamma(v)\cap C|=b$ for each $v\in V\setminus C$. An $(a,b)$-regular set is simply called a \emph{regular set} if the parameters $a$ and $b$ are not important in the context.
\end{definition}

In particular, a $(0,1)$-regular set in $\Gamma$ is exactly a perfect code in $\Gamma$, and a $(1,1)$-regular set in $\Gamma$ is exactly a total perfect code in $\Gamma$.

It is not difficult to see that regular sets in a regular graph are precisely equitable partitions of the graph into two parts. In general, a partition $\mathcal{V} = \{V_1, V_2, \dots, V_r\}$ of the vertex set of a graph $\Gamma$ is said to be \emph{equitable} \cite[$\mathsection9.3$]{GR2001} if there exists an $r\times r$ matrix $M=(m_{ij})$ such that for any $i, j \in \{1, 2, \ldots, r\}$, every vertex in $V_i$ has exactly $m_{ij}$ neighbors in $V_j$.
The matrix $M$ is called the \emph{quotient matrix}~\cite{BCGG2019} of the partition $\mathcal{V}$.
If $\Gamma$ is a connected $k$-regular graph, then $M$ has all row sums equal to $k$, and so $k$ is a simple eigenvalue of $M$~\cite[Theorem 9.3.3]{GR2001}. The equitable partition $\mathcal{V}$ of $\Gamma$ is said to be \emph{$\mu$-equitable}~\cite{BCGG2019} if all eigenvalues of its quotient matrix $M$ other than $k$ are equal to $\mu$. It is shown in~\cite[Corollary~2.3]{BCGG2019} that a non-trivial coarsening of a $\mu$-equitable partition is $\mu$-equitable. Thus it is especially important to study equitable partitions with exactly two parts, and for regular graphs such partitions are precisely regular sets in the graph. In fact, it can be verified (see also \cite{RCZ2018}) that for a connected $k$-regular graph $\Gamma$ with vertex set $V$, a nonempty proper subset $C$ of $V$ is an $(a,b)$-regular set in $\Gamma$ if and only if $\{C,V\setminus C\}$ is a $\mu$-equitable partition of $\Gamma$, where $a, b$ and $\mu = a-b$ are related by
$$
a=((k-\mu)|C|+\mu|V|)/|V|
$$
and
$$
b=((k-\mu)|C|)/|V|.
$$
(This can be proved using the fact that the quotient matrix of any $\mu$-equitable partition of $\Gamma$ has trace $k+\mu$ and that $|C|(k-a)=(|V|-|C|)b$ for any $(a,b)$-regular set $C$ in $\Gamma$.) In particular, any regular graph $\Gamma$ admitting an $(a,b)$-regular set must have $a-b$ as an eigenvalue (see also \cite{RCZ2018}), because all eigenvalues of the quotient matrix of any equitable partition of $\Gamma$ are also eigenvalues of $\Gamma$ (see \cite[Theorem 9.3.3]{GR2001}). This generalizes the well-known result that any regular graph admitting a perfect code should have $-1$ as an eigenvalue (see \cite[Lemma 9.3.4]{GR2001}). From a coding theoretical point of view, an $(a, b)$-regular set in a $k$-regular graph $\Gamma$ is exactly a completely regular code $C$ in $\Gamma$ (see, for example, \cite{Neumaier}) such that the corresponding distance partition has exactly two parts, namely $\{C, V \setminus C\}$, with quotient matrix $\bmat{a & k-a\\ b & k-b}$.

Perfect codes in Cayley graphs have attracted special attention \cite{Dinitz2006,Hajos1942,HXZ2018,Szab2006,SS2009} since they are generalizations of perfect codes under the Hamming and Lee metrics and are closely related to factorizations and tilings of groups. Denote by $e$ the identity element of the group under consideration. For a group $G$ and an inverse-closed subset $S$ of $G\setminus\{e\}$, the \emph{Cayley graph} $\Cay(G,S)$ of $G$ with \emph{connection set} $S$ is defined to be the graph with vertex set $G$ such that $x,y\in G$ are adjacent if and only if $yx^{-1}\in S$. It was observed in \cite{HXZ2018} that subgroups of a given group which are perfect codes in some Cayley graphs of the group are particularly interesting since they are an analogue of perfect linear codes in the classical setting of coding theory. In general, if a subset $C$ of $G$ is a (total) perfect code in some Cayley graph of $G$, then $C$ is called a \emph{(total) perfect code of $G$} \cite{HXZ2018}. Subgroups which are also perfect codes of the group were studied in \cite{HXZ2018} and \cite{MWWZ2019}, and a characterization of those groups whose subgroups are all perfect codes of the group was given in \cite{MWWZ2019}.
In~\cite[Theorem~2.2]{HXZ2018}, it was proved that a normal subgroup $H$ of $G$ is a perfect code of $G$ if and only if
\begin{equation}\label{equ9}
\text{for any $g\in G$ with $g^2\in H$, there exists $h\in H$ such that $(gh)^2=e$},
\end{equation}
and that $H$ is a total perfect code of $G$ if and only if~\eqref{equ9} holds and $|H|$ is even.

Generalizing the concept of perfect codes of a group, we call a subset $C$ of a group $G$ an \emph{$(a,b)$-regular set of $G$} if $C$ is an $(a,b)$-regular set in some Cayley graph of $G$.
Thus a perfect code of $G$ is precisely a $(0,1)$-regular set of $G$, and a total perfect code of $G$ is precisely a $(1,1)$-regular set of $G$. In line with the study in \cite{HXZ2018}, it is natural to ask when a normal subgroup of a group is an $(a,b)$-regular set of the group. We answer this question in this paper by proving the following theorem which is the main result of the paper.

\begin{theorem}\label{thm5}
Let $G$ be a group and let $H$ be a non-trivial normal subgroup of $G$. Then the following statements are equivalent:
\begin{enumerate}
\item[{\rm(a1)}] $G$ and $H$ satisfy condition \eqref{equ9};
\item[{\rm(a2)}] $H$ is a perfect code of $G$;
\item[{\rm(a3)}] $H$ is an $(a,b)$-regular set of $G$ for every pair of integers $a$ and $b$ with $0\leqslant a\leqslant|H|-1$ and $0\leqslant b\leqslant |H|$ such that $\gcd(2,|H|-1)$ divides $a$.
\end{enumerate}
And the following statements are also equivalent:
\begin{enumerate}
\item[{\rm(b1)}] $G$ and $H$ satisfy condition \eqref{equ9}, and $|H|$ is even;
\item[{\rm(b2)}] $H$ is a total perfect code of $G$;
\item[{\rm(b3)}] $H$ is an $(a,b)$-regular set of $G$ for every pair of integers $a$ and $b$ with $0\leqslant a\leqslant|H|-1$ and $0\leqslant b\leqslant |H|$.
\end{enumerate}
\end{theorem}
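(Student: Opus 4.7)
I will treat the two equivalence groups in parallel. The equivalences (a1)$\Leftrightarrow$(a2) and (b1)$\Leftrightarrow$(b2) are exactly Theorem~2.2 of \cite{HXZ2018}. The implications (a3)$\Rightarrow$(a2) and (b3)$\Rightarrow$(b2) follow immediately by specializing $(a,b)=(0,1)$ and $(a,b)=(1,1)$ respectively (both pairs meet the numerical constraints since $|H|\geqslant 2$ by non-triviality of $H$). The substantive content is therefore to derive (a3) from (a1), and (b3) from (b1), for every admissible pair $(a,b)$.

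Fix a proper normal subgroup $H$ of $G$. For any inverse-closed $S\subseteq G\setminus\{e\}$, the neighbourhood of $x$ in $\Cay(G,S)$ is $Sx$, and using $Hg^{-1}=g^{-1}H$ (normality) one checks that $H$ is an $(a,b)$-regular set of $\Cay(G,S)$ if and only if $|S\cap H|=a$ and $|S\cap C|=b$ for every coset $C$ of $H$ other than $H$ itself. The task thus reduces to constructing an inverse-closed $S\subseteq G\setminus\{e\}$ with these prescribed coset-intersection sizes. I would build $S$ coset by coset: pick an inverse-closed $a$-subset of $H\setminus\{e\}$; for each unordered pair $\{C,C^{-1}\}$ of distinct non-trivial cosets of $H$, take any $b$-subset $X\subseteq C$ and adjoin $X\cup X^{-1}$ to $S$; and for each non-trivial coset $C$ with $C=C^{-1}$, choose an inverse-closed $b$-subset of $C$. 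Taking the union yields an inverse-closed $S$ not containing $e$ and with the required intersection sizes.

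The key counting lemma underpinning feasibility is: if a finite inverse-closed subset $F$ of $G$ contains at least one involution, then $F$ has an inverse-closed $k$-subset for every $0\leqslant k\leqslant|F|$, obtained by mixing $i$ involutions (contributing $1$ each) with $p$ inverse-pairs (contributing $2$ each) so that $i+2p=k$; the presence of an involution makes both parities attainable. Applied to $F=H\setminus\{e\}$, this produces the needed $a$-subset whenever $|H|$ is even (Cauchy gives an involution) and whenever $a$ is even with $|H|$ odd (all sizes are then even), which is precisely the condition $\gcd(2,|H|-1)\mid a$ in (a3); the hypothesis in (b1) that $|H|$ is even removes this parity restriction entirely. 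For a self-paired non-trivial coset $C=gH$, one has $g^2\in H$, so condition \eqref{equ9} supplies $h\in H$ with $(gh)^2=e$; since $g\notin H$, the element $gh$ is a genuine involution living inside $C$, and the lemma then delivers every $b\in\{0,1,\dots,|H|\}$.

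The step I expect to be most delicate is the counting lemma together with the case split between self-paired cosets ($C=C^{-1}$, which require the internal involution granted by \eqref{equ9}) and swapped pairs ($C\neq C^{-1}$, which are free of parity constraints because $X$ and $X^{-1}$ land in different cosets). Once this case analysis is done cleanly, the verification that the union $S$ is inverse-closed, avoids $e$, and realizes the correct intersection sizes on every coset is routine, and the same construction covers both (a3) and (b3) with the appropriate parity bookkeeping on $a$.
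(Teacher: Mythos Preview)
Your argument is correct, and the underlying idea---build $S$ so that $|S\cap H|=a$ and $|S\cap C|=b$ on every non-trivial coset $C$, handling self-paired cosets via an involution and paired cosets freely---is the same as the paper's. The execution differs, however. The paper first restates the target condition in group-ring form (their Lemmas~2.1, 3.1 and 3.2), assumes (a2) rather than (a1), and then in Construction~3.3 manufactures $S$ out of an explicit perfect-code connection set $S_0$: for each $h_i\in H$ it forms $S_i=\{s_1h_i,\dots,s_mh_i\}\cup\{\text{inverses}\}$ to populate the paired cosets, and uses the involution $s_j\in S_0$ already sitting inside each self-paired coset $s_jH$. Your version bypasses the group ring entirely, invokes condition~\eqref{equ9} directly to locate an involution in each self-paired coset, and makes arbitrary choices of $b$-subsets elsewhere. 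This is more elementary and more flexible; the paper's route, in exchange, yields an explicit recipe that transforms any given $S_0$ witnessing (a2) into an $S$ witnessing (a3), which they advertise as a feature. Both hinge on the same ``counting lemma'' you isolate (an inverse-closed set containing an involution admits inverse-closed subsets of every size), and your coset-intersection reformulation is exactly the content of their Lemma~3.1 unpacked from group-ring language.
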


The equivalence of (a1) and (a2) and that of (b1) and (b2) have been proved in \cite[Theorem~2.2]{HXZ2018}. So the essence of Theorem \ref{thm5} lies in that (a2) implies (a3) and (b2) implies (b3). Moreover, as will be seen in Construction \ref{con1}, based on an inverse-closed subset $S_0$ of $G\setminus\{e\}$ such that $\Cay(G,S_0)$ admits $H$ as a perfect code, we will give a construction of an inverse-closed subset $S$ of $G\setminus\{e\}$ such that $\Cay(G,S)$ admits $H$ as an $(a, b)$-regular set. A construction of an inverse-closed subset $S_0$ of $G\setminus\{e\}$ such that $\Cay(G,S_0)$ admits a given normal subgroup $H$ satisfying \eqref{equ9} as a perfect code was given in the proof of \cite[Theorem~2.2]{HXZ2018}. Combining this construction with Construction \ref{con1}, we can construct, for any non-trivial normal subgroup $H$ of $G$ satisfying \eqref{equ9} and every pair of integers $a$ and $b$ as in (a3), an inverse-closed subset $S$ of $G\setminus\{e\}$ depending on $(a,b)$ such that $\Cay(G,S)$ admits $H$ as an $(a, b)$-regular set.

The condition that $H$ is normal in $G$ will be used in our proof of Theorem~\ref{thm5}. However, we do not know any example of a non-normal subgroup $H$ of a group $G$ such that the equivalence of (a1) and (a2) or that of (b1) and (b2) fails. This prompts us to ask the following question.
\begin{question}
\label{que1}
Is it still true that (a1) and (a2) in Theorem \ref{thm5} are equivalent if the subgroup $H$ of $G$ is not normal? Is it still true that (b1) and (b2) in Theorem \ref{thm5} are equivalent if the subgroup $H$ of $G$ is not normal?
\end{question}



The rest of the paper is structured as follows. In the next section we will prove a lemma which will be used in the proof of Theorem \ref{thm5}. In Section~\ref{sec1}, we will establish some general results on subgroup perfect codes in Cayley graphs and prove Theorem~\ref{thm5} at the end of the section. We will conclude the paper with examples and remarks in Section~\ref{sec:remarks}.

\section{A lemma}

As usual, for a group $G$, denote by $\mathbb{Z}[G]$ the group ring of $G$ over $\mathbb{Z}$. For a subset $A$ of the group $G$, denote
\[
\overline{A}=\sum_{g\in G}\mu_A(g)g\in \mathbb{Z}[G],
\]
where
\[
\mu_A(g)=\left\{\begin{aligned}
1,&\quad g\in A;\\
0,&\quad g\in G\setminus A.
\end{aligned}
\right.
\]
In~\cite[Lemma~2.10]{HXZ2018}, a characterization of perfect codes and total perfect codes in Cayley graphs was given in the language of group rings. The following lemma extends this result to the general case of $(a, b)$-regular sets.

\begin{lemma}\label{thm1}
Let $G$ be a group, $C$ a subset of $G$, and $S$ an inverse-closed subset of $G\setminus\{e\}$. Let $a$ and $b$ be nonnegative integers. Then the following statements are equivalent:
\begin{enumerate}[{\rm(a)}]
\item $C$ is an $(a,b)$-regular set in $\Cay(G,S)$;
\item $|Sx\cap C|=a$ for each $x\in C$ and $|Sx\cap C|=b$ for each $x\in G\setminus C$;
\item $\overline{S}\cdot\overline{C}=a\,\overline{C}+b\,\overline{G\setminus C}$;
\item $\overline{S}\cdot\overline{C}+(b-a)\,\overline{C}=b\,\overline{G}$.
\end{enumerate}
\end{lemma}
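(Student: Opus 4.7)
The plan is to establish the chain of equivalences (a) $\Leftrightarrow$ (b) $\Leftrightarrow$ (c) $\Leftrightarrow$ (d), treating each step separately. Since this lemma is essentially a translation between graph-theoretic, set-theoretic, and group-ring formulations of the same $(a,b)$-regular condition, most of the work will be careful bookkeeping rather than substantive argument.

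First, for (a) $\Leftrightarrow$ (b), I would unpack the definition of the Cayley graph: by construction, $y$ is adjacent to $x$ in $\Cay(G,S)$ exactly when $yx^{-1}\in S$, i.e., when $y\in Sx$. Hence the neighborhood of $x$ is precisely $\Gamma(x)=Sx$, and so $|\Gamma(x)\cap C|=|Sx\cap C|$. Plugging this into Definition~\ref{def:ab} yields the equivalence immediately.

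The main step is (b) $\Leftrightarrow$ (c), which requires one group-ring calculation. I would compute
\[
\overline{S}\cdot\overline{C}=\sum_{s\in S}\sum_{c\in C}sc=\sum_{g\in G}\Bigl|\{(s,c)\in S\times C:sc=g\}\Bigr|\,g
\]
and then identify the coefficient. For fixed $g\in G$, each pair $(s,c)$ with $sc=g$ is determined by $c$ via $s=gc^{-1}$, so the coefficient equals $|\{c\in C:gc^{-1}\in S\}|$. Here is where the hypothesis that $S$ is inverse-closed enters: $gc^{-1}\in S$ is equivalent to $cg^{-1}\in S^{-1}=S$, i.e., $c\in Sg$. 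Therefore the coefficient of $g$ in $\overline{S}\cdot\overline{C}$ is exactly $|Sg\cap C|$. On the other hand, the coefficient of $g$ in $a\,\overline{C}+b\,\overline{G\setminus C}$ is $a$ if $g\in C$ and $b$ otherwise. Comparing coefficients term by term gives (b) $\Leftrightarrow$ (c). I expect this coefficient identification to be the only step that needs care, as one must be attentive to left versus right multiplication and to the role of inverse-closedness.

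Finally, (c) $\Leftrightarrow$ (d) is pure manipulation in $\mathbb{Z}[G]$: since $\overline{G\setminus C}=\overline{G}-\overline{C}$, the right-hand side of (c) rewrites as
\[
a\,\overline{C}+b\,\overline{G\setminus C}=(a-b)\,\overline{C}+b\,\overline{G},
\]
so (c) is equivalent to $\overline{S}\cdot\overline{C}+(b-a)\,\overline{C}=b\,\overline{G}$, which is (d). No obstacle here; this is just rearrangement. Overall, the only genuinely substantive ingredient is the coefficient computation in (b) $\Leftrightarrow$ (c), and the use of the inverse-closed hypothesis there is the one subtlety worth flagging in the write-up.
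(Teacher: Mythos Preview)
Your proposal is correct and follows essentially the same route as the paper: the paper also declares (a)$\Leftrightarrow$(b) and (c)$\Leftrightarrow$(d) immediate, and carries out the same coefficient computation in $\mathbb{Z}[G]$ (using $S^{-1}=S$) to obtain $\overline{S}\cdot\overline{C}=\sum_{x\in G}|Sx\cap C|\,x$ for (b)$\Leftrightarrow$(c). Your slightly more explicit justifications for the ``clear'' steps are fine but not a departure in method.
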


\begin{proof}
It is clear that (a) and (b) are equivalent and (c) and (d) are equivalent. Since $S$ is inverse-closed, we have
\begin{align*}
\overline{S}\cdot\overline{C}
&=\sum_{s\in S}\sum_{c\in C}sc\\
&=\sum_{x\in G}\,\Bigg(\sum_{(s,c)\in S\times C, sc=x}1\Bigg)x \\
&=\sum_{x\in G}\Bigg(\sum_{c\in C, xc^{-1}\in S}1\Bigg)x \\
&=\sum_{x\in G}\Bigg(\sum_{c\in C, c\in S^{-1}x}1\Bigg)x \\
&=\sum_{x\in G}|S^{-1}x\cap C|x\\
&=\sum_{x\in G}|Sx\cap C|x\\
&=\sum_{x\in C}|Sx\cap C|x+\sum_{x\in G\setminus C}|Sx\cap C|x.
\end{align*}
Note that (b) holds if and only if
\[
\sum_{x\in C}|Sx\cap C|x=a\,\overline{C}
\]
and
\[
\quad\sum_{x\in G\setminus C}|Sx\cap C|x=b\,\overline{G\setminus C}.
\]
It follows that (b) and (c) are equivalent. This completes the proof.
\end{proof}

Since a $(0,1)$-regular set is precisely a perfect code, in the special case when $(a, b)=(0,1)$, Lemma~\ref{thm1} gives rise to the following known result.

\begin{corollary}\label{cor1}
\emph{(\cite[Lemma~2.10]{HXZ2018})}
Let $G$ be a group, $C$ a subset of $G$, and $S$ an inverse-closed subset of $G\setminus\{e\}$.
Then $C$ is a perfect code in $\Cay(G,S)$ if and only if $\overline{S\cup\{e\}}\cdot\overline{C}=\overline{G}$.
\end{corollary}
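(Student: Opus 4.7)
The plan is to deduce this corollary directly from Lemma~\ref{thm1} by specializing to $a=0$ and $b=1$, since (as noted in the introduction) a perfect code in a graph is exactly a $(0,1)$-regular set. First I would invoke the equivalence of items~(a) and~(d) of Lemma~\ref{thm1} with these parameters, which gives immediately that $C$ is a perfect code in $\Cay(G,S)$ if and only if
\[
\overline{S}\cdot\overline{C}+\overline{C}=\overline{G}.
\]

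Next I would translate the left-hand side into the form appearing in the corollary. Because the hypothesis $e\notin S$ makes $S$ and $\{e\}$ disjoint, the characteristic-function description of $\overline{A}$ gives the decomposition $\overline{S\cup\{e\}}=\overline{S}+e$, where $e$ denotes both the identity of $G$ and its image in $\mathbb{Z}[G]$. Multiplying on the right by $\overline{C}$ yields $\overline{S\cup\{e\}}\cdot\overline{C}=\overline{S}\cdot\overline{C}+\overline{C}$, so the displayed equation above is equivalent to $\overline{S\cup\{e\}}\cdot\overline{C}=\overline{G}$, which is the desired equivalent condition.

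There is no substantive obstacle in this argument; it is a one-line specialization followed by a bookkeeping identity in $\mathbb{Z}[G]$. The only point requiring even a moment of care is the appeal to $e\notin S$, without which the split $\overline{S\cup\{e\}}=\overline{S}+e$ would fail (one would instead get $\overline{S\cup\{e\}}=\overline{S}$, and the corollary as stated would be incorrect).
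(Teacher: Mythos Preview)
Your argument is correct and matches the paper's approach exactly: the paper does not even spell out a proof, merely remarking that the corollary is the special case $(a,b)=(0,1)$ of Lemma~\ref{thm1}. Your derivation via item~(d) and the identity $\overline{S\cup\{e\}}=\overline{S}+e$ (using $e\notin S$) is precisely the intended one-line specialization.
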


\section{Subgroup regular sets}
\label{sec1}

Whenever we use the notation $\sqcup$ we mean the sets involved in the union are pairwise disjoint. For example, $A \sqcup B$ is the union of disjoint sets $A$ and $B$, and $\sqcup_{i=1}^n A_i$ is the union of pairwise disjoint sets $A_1, A_2, \ldots, A_n$.

\begin{lemma}\label{lem1}
Let $G$ be a group, $H$ a subgroup of $G$, and $S$ an inverse-closed subset of $G\setminus\{e\}$. Let $a$ and $b$ be nonnegative integers. Then $H$ is an $(a,b)$-regular set in $\Cay(G,S)$ if and only if $|S\cap H|=a$ and $\overline{S\setminus H}\cdot \overline{H}=b\,\overline{G\setminus H}$.
\end{lemma}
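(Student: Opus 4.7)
The plan is to specialize Lemma~\ref{thm1} (the group-ring characterization of $(a,b)$-regular sets) to the case $C=H$ and then exploit the subgroup structure of $H$ to split the resulting identity cleanly into an $H$-part and a $(G\setminus H)$-part. By the equivalence of (a) and (c) in Lemma~\ref{thm1}, $H$ is an $(a,b)$-regular set in $\Cay(G,S)$ if and only if
\[
\overline{S}\cdot\overline{H}=a\,\overline{H}+b\,\overline{G\setminus H}.
\]
I will work with this single identity throughout.

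The key observation is the disjoint decomposition $S=(S\cap H)\sqcup(S\setminus H)$, which gives $\overline{S}=\overline{S\cap H}+\overline{S\setminus H}$. Because $H$ is a subgroup, for every $s\in S\cap H$ we have $sH=H$, hence $s\,\overline{H}=\overline{H}$; summing over $s\in S\cap H$ yields
\[
\overline{S\cap H}\cdot\overline{H}=|S\cap H|\,\overline{H},
\]
which is supported on $H$. Symmetrically, for every $s\in S\setminus H$ the coset $sH$ is disjoint from $H$, so $sH\subseteq G\setminus H$; summing shows that $\overline{S\setminus H}\cdot\overline{H}$ is supported entirely on $G\setminus H$.

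Substituting these into the displayed identity gives
\[
|S\cap H|\,\overline{H}+\overline{S\setminus H}\cdot\overline{H}=a\,\overline{H}+b\,\overline{G\setminus H},
\]
and since the left-hand summands have disjoint supports (one in $H$, the other in $G\setminus H$), and the right-hand summands likewise, equality is equivalent to the conjunction of $|S\cap H|\,\overline{H}=a\,\overline{H}$ and $\overline{S\setminus H}\cdot\overline{H}=b\,\overline{G\setminus H}$. The first equation is just $|S\cap H|=a$ (as $H$ is nonempty), and the second is the claimed identity. This proves both directions simultaneously.

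I do not anticipate a genuine obstacle here; the only point that requires care is the disjoint-support argument that allows us to read off the two coefficient conditions separately, and this rests solely on $H$ being a subgroup so that cosets $sH$ are either equal to $H$ or disjoint from it.
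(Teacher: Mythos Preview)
Your proof is correct and follows essentially the same approach as the paper's: both invoke Lemma~\ref{thm1} to reduce to the group-ring identity $\overline{S}\cdot\overline{H}=a\,\overline{H}+b\,\overline{G\setminus H}$, decompose $S=(S\cap H)\sqcup(S\setminus H)$, and use $h\,\overline{H}=\overline{H}$ for $h\in H$ to obtain $\overline{S}\cdot\overline{H}=|S\cap H|\,\overline{H}+\overline{S\setminus H}\cdot\overline{H}$. Your explicit disjoint-support argument to separate the $H$-part from the $(G\setminus H)$-part is a bit more detailed than the paper's terse ``Thus the result follows,'' but the underlying idea is identical.
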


\begin{proof}
According to Lemma~\ref{thm1}, $H$ is an $(a,b)$-regular set in $\Cay(G,S)$ if and only if $\overline{S}\cdot\overline{H}=a\,\overline{H}+b\,\overline{G\setminus H}$. Since $S=(S\cap H) \sqcup (S\setminus H)$ and $h \overline{H}=\overline{H}$ for all $h\in H$, we have
\begin{align*}
\overline{S}\cdot\overline{H}&=(\overline{S\cap H}+\overline{S\setminus H})\cdot\overline{H}\\
&=\overline{S\cap H}\cdot\overline{H}+\overline{S\setminus H}\cdot\overline{H}\\
&=|S\cap H|\overline{H}+\overline{S\setminus H}\cdot\overline{H}.
\end{align*}
Thus the result follows since $sH\cap H =\emptyset$ for all $s\in S\setminus H$.
\end{proof}

\begin{lemma}\label{lem4}
Let $G$ be a group, $H$ a subgroup of $G$, and $S$ an inverse-closed subset of $G\setminus\{e\}$. Let $a$ and $b$ be nonnegative integers. Suppose that $H$ is a perfect code in some Cayley graph $\Cay(G,S_0)$ of $G$. Then $H$ is an $(a,b)$-regular set in $\Cay(G,S)$ if and only if $|S\cap H|=a$ and $\overline{S\setminus H}\cdot \overline{H}=b\,\overline{S_0}\cdot \overline{H}$.
\end{lemma}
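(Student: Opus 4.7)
The plan is to reduce the claim to Lemma~\ref{lem1} by rewriting the quantity $\overline{G\setminus H}$ that appears there in terms of $\overline{S_0}\cdot\overline{H}$, using the hypothesis that $H$ is a perfect code in $\Cay(G,S_0)$.

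First, I would apply Lemma~\ref{lem1} to the Cayley graph $\Cay(G,S)$: it tells us that $H$ is an $(a,b)$-regular set in $\Cay(G,S)$ if and only if $|S\cap H|=a$ and $\overline{S\setminus H}\cdot\overline{H}=b\,\overline{G\setminus H}$. Hence, to establish the lemma it suffices to prove the group-ring identity
\begin{equation*}
\overline{G\setminus H}=\overline{S_0}\cdot\overline{H},
\end{equation*}
since then $b\,\overline{G\setminus H}$ can be replaced by $b\,\overline{S_0}\cdot\overline{H}$ and the two characterizations coincide.

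Second, this identity will follow by applying Corollary~\ref{cor1} to $H$ inside $\Cay(G,S_0)$, which yields $\overline{S_0\cup\{e\}}\cdot\overline{H}=\overline{G}$. Since $e\notin S_0$, one has $\overline{S_0\cup\{e\}}=\overline{S_0}+e$ in $\Z[G]$, so the left-hand side expands as $\overline{S_0}\cdot\overline{H}+\overline{H}$. Subtracting $\overline{H}$ from both sides and using $\overline{G}-\overline{H}=\overline{G\setminus H}$ (valid because $H\subseteq G$) gives exactly the required identity.

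The whole argument reduces to a single short computation in $\Z[G]$, so I do not expect any real obstacle. The only point to watch is the use of $e\notin S_0$ when separating the contribution of the identity element from that of $S_0$; this is what guarantees that the cross term produced by the expansion is precisely $\overline{H}$ rather than something larger.
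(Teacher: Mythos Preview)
Your proposal is correct and follows essentially the same route as the paper: both arguments invoke Lemma~\ref{lem1} and then use Corollary~\ref{cor1} to derive the identity $\overline{G\setminus H}=\overline{S_0}\cdot\overline{H}$, differing only in the order of presentation.
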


\begin{proof}
Since $H$ is a perfect code in $\Cay(G,S_0)$, we derive from Corollary \ref{cor1} that
\[
\overline{G}=\overline{S_0\cup\{e\}}\cdot\overline{H}=\overline{S_0}\cdot\overline{H}+\overline{H}.
\]
Hence
\[
\overline{G\setminus H}=\overline{G}-\overline{H}=\overline{S_0}\cdot\overline{H}.
\]
This together with Lemma~\ref{lem1} implies that $H$ is an $(a,b)$-regular set in $\Cay(G,S)$ if and only if
$|S\cap H|=a$ and $\overline{S\setminus H}\cdot\overline{H}=b\,\overline{S_0}\cdot\overline{H}$.
\end{proof}

Note that if a Cayley graph $\Cay(G,S_0 )$ admits a subgroup $H$ of $G$ as a perfect code, then $H$ and $S_0$ must be disjoint. With this in mind we give the following construction.

\begin{construction}\label{con1}
Given a group $G$, a normal subgroup $H$ of $G$, an inverse-closed subset $K$ of $H\setminus\{e\}$, a nonnegative integer $b\leqslant|H|$ and an inverse-closed subset $S_0$ of $G\setminus\{e\}$ such that $H$ is a perfect code in $\Cay(G,S_0)$, construct a subset $S$ of $G$ as follows.

Write
$$
H=\{h_1,h_2,\ldots,h_d\}
$$
and
$$
S_0=\{s_1,s_2,\ldots,s_{2m-1},s_{2m},s_{2m+1},\ldots,s_n\},
$$
where $d=|H|$, $n=|S_0|$, $s_i^{-1}=s_{2m+1-i}$ for $i\in \{1,2,\dots,2m\}$ and $s^{-1}_j=s_j$ for $j\in \{2m+1,2m+2,\dots,n\}$. Note that $s_{j}H$ is inverse-closed as $H$ is normal in $G$ and $s_j$ is an involution, and $e \notin s_{j}H$ as $H$ is a perfect code in $\Cay(G,S_0)$. Thus, for $j\in \{2m+1,2m+2,\dots,n\}$, we may write
\begin{equation*}\label{equ1}
s_jH=\{u_{j,1},u^{-1}_{j,1},u_{j,2},{u^{-1}_{j,2}},\ldots,u_{j,\alpha_j},{u^{-1}_{j,\alpha_j}},
v_{j,1},v_{j,2},\ldots,v_{j,\beta_j}\}
\end{equation*}
with the order of $u_{j,k}$ greater than $2$ for $k\in \{1,2,\ldots,\alpha_j\}$, $v_{j,\ell}$ an involution for $\ell\in\{1,2,\ldots,\beta_j\}$, and $v_{j,1}=s_j$. Since $2\alpha_j+\beta_j = |H| = d \ge b$, we have $\beta_j \ge b - 2\alpha_j$. For $i\in\{1,2,\ldots,b\}$, let
\begin{equation}\label{equ3}
S_i=\{s_1h_i,s_2h_i,\ldots,s_mh_i,(s_mh_i)^{-1},\ldots,(s_2h_i)^{-1},(s_1h_i)^{-1}\}.
\end{equation}
For $j\in\{2m+1, 2m+2, \ldots,n\}$, let
\[
T_j=
\begin{cases}
\big\{u_{j,1},u^{-1}_{j,1},u_{j,2},{u^{-1}_{j,2}},\ldots,u_{j,\alpha_j},{u^{-1}_{j,\alpha_j}},v_{j,1},v_{j,2},\ldots,v_{j,b-2\alpha_j}\big\}
&\text{if $b>2\alpha_j$;}\\
\big\{u_{j,1},u^{-1}_{j,1},u_{j,2},{u^{-1}_{j,2}},\ldots,u_{j,\frac{b-1}{2}},u^{-1}_{j,\frac{b-1}{2}},v_{j,1}\big\}
&\text{if $b\leqslant 2\alpha_j$ and $2\nmid b$;}\\
\big\{u_{j,1},u^{-1}_{j,1},u_{j,2},{u^{-1}_{j,2}},\ldots,u_{j,\frac{b}{2}},u^{-1}_{j,\frac{b}{2}}\big\}
&\text{if $b\leqslant 2\alpha_j$ and $2\mid b$.}
\end{cases}
\]
Let
\begin{equation}
\label{eq:SKT}
S=K\cup\left(\bigcup_{i=1}^bS_i\right)\cup\left(\bigcup_{j=2m+1}^nT_j\right).
\end{equation}
\end{construction}

\begin{theorem}\label{thm3}
In the notation of Construction~$\ref{con1}$, the following hold:
\begin{enumerate}[{\rm(a)}]
\item $|S_i|=2m$ for $i\in\{1,2,\ldots,b\}$;
\item $|T_j|=b$ for $j\in\{2m+1,2m+2,\ldots,n\}$;
\item
$
S=K\sqcup\left(\bigsqcup_{i=1}^bS_i\right)\sqcup\left(\bigsqcup_{j=2m+1}^nT_j\right)
$;
\item $S$ is an inverse-closed subset of $G\setminus\{e\}$;
\item $H$ is a $(|K|,b)$-regular set in $\Cay(G,S)$.
\end{enumerate}
\end{theorem}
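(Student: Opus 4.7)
The plan is to treat parts (a)--(d) as careful bookkeeping and devote the bulk of the argument to part (e), which I will deduce from Lemma~\ref{lem4}. The backbone is a structural observation I will establish at the outset: since $H$ is a perfect code in $\Cay(G,S_0)$ and $H$ is normal in $G$, Corollary~\ref{cor1} together with the identity $\overline{\{x\}}\cdot\overline{H}=\overline{xH}$ forces $S_0\cup\{e\}$ to be a transversal of $H$ in $G$. Consequently the cosets $H,s_1H,\ldots,s_nH$ partition $G$, and in particular $S_0\cap H=\emptyset$.

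Parts (a)--(d) will then follow by tracking the host coset of each element appearing in the construction. Using $H\lhd G$, I will observe that $s_kh_i\in s_kH$ while $(s_kh_i)^{-1}=h_i^{-1}s_k^{-1}\in s_k^{-1}H=s_{2m+1-k}H$; this yields $|S_i|=2m$ since the $2m$ cosets $s_1H,\ldots,s_{2m}H$ are pairwise distinct. The equality $|T_j|=b$ is a direct check of the three defining cases, with $b\leqslant|H|=2\alpha_j+\beta_j$ guaranteeing that the listed elements actually lie inside $s_jH$. Part~(c) follows from coset disjointness: $K\subseteq H$, $S_i\subseteq\bigsqcup_{k=1}^{2m}s_kH$, and $T_j\subseteq s_jH$ for $j\geqslant 2m+1$, while distinctness among the $S_i$'s reduces to the fact that inside each coset $s_kH$ (with $1\leqslant k\leqslant m$) only the single element $s_kh_i$ appears, and $h_i$ determines $i$. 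Part~(d) is by inspection: inverse-closedness is built into each piece of the construction, and $e$ belongs to none of them because $S_0\cap H=\emptyset$.

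The heart of the proof is (e). By Lemma~\ref{lem4} it suffices to verify $|S\cap H|=|K|$ and $\overline{S\setminus H}\cdot\overline{H}=b\,\overline{S_0}\cdot\overline{H}$; the first equality is immediate from (c) since only $K$ lies inside $H$. For the second, using the identity $\overline{\{x\}}\cdot\overline{H}=\overline{xH}$ together with the coset bookkeeping above, the contribution of each $S_i$ is
\[
\overline{S_i}\cdot\overline{H}=\sum_{k=1}^{m}\overline{s_kH}+\sum_{k=1}^{m}\overline{s_{2m+1-k}H}=\sum_{k=1}^{2m}\overline{s_kH},
\]
which is independent of $i$, so $\sum_{i=1}^{b}\overline{S_i}\cdot\overline{H}=b\sum_{k=1}^{2m}\overline{s_kH}$; and since $T_j\subseteq s_jH$ with $|T_j|=b$, the contribution of each $T_j$ is $\overline{T_j}\cdot\overline{H}=b\,\overline{s_jH}$. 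Summing yields $\overline{S\setminus H}\cdot\overline{H}=b\sum_{k=1}^{n}\overline{s_kH}=b\,\overline{S_0}\cdot\overline{H}$, as required.

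The only genuinely delicate point, and the place where the hypotheses truly interact, is the coset tracking of inverses: correctly identifying $(s_kh_i)^{-1}$ as an element of $s_{2m+1-k}H$ requires the normality of $H$. Once that is pinned down, both the combinatorial claims (a)--(d) and the group-ring computation in (e) follow by collection of terms in $\mathbb{Z}[G]$.
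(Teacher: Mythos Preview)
Your proposal is correct and follows essentially the same approach as the paper: both establish that $S_0\cup\{e\}$ is a left transversal of $H$ in $G$, use the normality of $H$ to place $(s_kh_i)^{-1}$ in $s_{2m+1-k}H$ for the coset bookkeeping underlying (a)--(d), and then verify (e) via Lemma~\ref{lem4} by computing $\overline{S_i}\cdot\overline{H}$ and $\overline{T_j}\cdot\overline{H}$ in $\mathbb{Z}[G]$ and summing. The only cosmetic difference is your use of $\overline{s_kH}$ where the paper writes $s_k\overline{H}$.
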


\begin{proof}
It is clear from Construction~\ref{con1} that $|T_j|=b$ and $S_i$, $T_j$ and $K$ are all inverse-closed subsets of $G\setminus\{e\}$ for $i\in\{1,2,\ldots,b\}$ and $j\in\{2m+1,2m+2,\ldots,n\}$. Thus statement~(b) holds, and $S$ is an inverse-closed subset of $G\setminus\{e\}$, as statement~(d) asserts.

Since $H$ is a perfect code in $\Cay(G,S_0)$, Corollary~\ref{cor1} implies that $S_0\cup\{e\}$ is an inverse-closed left transversal of $H$ in $G$. For $r\in\{1,2,\dots,m\}$ and $i\in\{1,2,\ldots,b\}$, we have
\begin{equation}\label{equ8}
(s_rh_i)^{-1}=h_i^{-1}s^{-1}_r\in Hs_r^{-1}=s_r^{-1}H=s_{2m+1-r}H.
\end{equation}
Hence the elements $s_1h_i,s_2h_i,\ldots,s_mh_i,(s_mh_i)^{-1},\ldots,(s_2h_i)^{-1},(s_1h_i)^{-1}$ are in pairwise distinct left cosets $s_1H,s_2H,\ldots,s_{2m-1}H,s_{2m}H$. As a consequence we obtain that
\[
s_1h_i,s_2h_i,\ldots,s_mh_i,(s_mh_i)^{-1},\ldots,(s_2h_i)^{-1},(s_1h_i)^{-1}
\]
are pairwise distinct, which implies that $|S_i|=2m$, proving statement~(a). Moreover, for $i\in\{1,2,\ldots,b\}$, we have
\begin{equation}\label{equ2}
S_i\subseteq\bigcup_{r=1}^{2m}s_rH
\end{equation}
and
\begin{equation}\label{equ4}
S_i\cap(s_rH)=
\begin{cases}
\big\{s_rh_i\big\}&\text{for $r\in\{1,2,\ldots,m\}$;}\\
\big\{(s_{2m+1-r}h_i)^{-1}\big\}&\text{for $r\in\{m+1,m+2,\ldots,2m\}$.}
\end{cases}
\end{equation}
Note that $K\subseteq H$ and $T_j\subseteq s_jH$ for $j\in\{2m+1,2m+2,\ldots,n\}$. We derive from~\eqref{equ2} that
\begin{equation}\label{equ5}
S=K\sqcup\left(\bigcup_{i=1}^bS_i\right)\sqcup\left(\bigsqcup_{j=2m+1}^nT_j\right).
\end{equation}
If $x\in S_i\cap S_j$ for distinct $i,j$ in $\{1,2,\ldots,b\}$, then \eqref{equ2}
implies $x\in s_rH$ for some $r \in \{1, 2, \ldots, 2m\}$ and so \eqref{equ4} immediately leads to a contradiction. Thus $\bigcup_{i=1}^bS_i=\bigsqcup_{i=1}^bS_i$. This together with~\eqref{equ5} proves statement~(c).

For $i\in\{1,2,\ldots,b\}$, by \eqref{equ8} and the construction of $S_i$ we have
\begin{align*}
\overline{S_i}\cdot\overline{H}&=\left(\sum_{r=1}^ms_rh_i+\sum_{r=1}^m(s_rh_i)^{-1}\right)\cdot\overline{H}\\
&=\sum_{r=1}^m s_rh_i\overline{H}+\sum_{r=1}^m(s_rh_i)^{-1}\overline{H}\\
&=\sum_{r=1}^ms_r\overline{H}+\sum_{r=1}^ms_{2m+1-r}\overline{H}\\
&=\sum_{r=1}^{2m}s_r\overline{H}.
\end{align*}
Hence
\begin{equation}\label{equ6}
\sum_{i=1}^{b}\overline{S_i}\cdot\overline{H}=\sum_{i=1}^{b}\sum_{r=1}^{2m}s_r\overline{H}=b\sum_{r=1}^{2m}s_r\overline{H}.
\end{equation}
For $j\in\{2m+1,2m+2,\ldots,n\}$, we derive from the construction of $T_j$ that the elements of $T_j$ are all in $s_jH$, whence
\[
\overline{T_j}\cdot\overline{H}=|T_j|s_j\overline{H}=bs_j\overline{H}.
\]
It follows that
\begin{equation}\label{equ7}
\sum_{j=2m+1}^n\overline{T_j}\cdot\overline{H}=\sum_{j=2m+1}^n\left(bs_j\overline{H}\right)=b\sum_{j=2m+1}^ns_j\overline{H}.
\end{equation}
Since $S\cap H=K$, we deduce from statement~(c) that
\[
S\setminus H = \left(\bigsqcup_{i=1}^bS_i\right)\sqcup\left(\bigsqcup_{j=2m+1}^nT_j\right).
\]
This together with \eqref{equ6} and \eqref{equ7} shows that
\begin{align*}
\overline{S\setminus H}\cdot\overline{H}&=\left(\sum_{i=1}^b\overline{S_i}+\sum_{j=2m+1}^n\overline{T_j}\right)\overline{H}\\
&=\sum_{i=1}^b\overline{S_i}\cdot\overline{H}+\sum_{j=2m+1}^n\overline{T_j}\cdot\overline{H}\\
&=b\sum_{r=1}^{2m}s_r\overline{H}+b\sum_{j=2m+1}^{n}s_j\overline{H}\\
&=b\sum_{k=1}^{n}s_k\overline{H}\\
&=b\,\overline{S_0}\cdot\overline{H}.
\end{align*}
Thus, since $H$ is a perfect code in $\Cay(G,S_0)$ and $S\cap H=K$, Lemma~\ref{lem4} implies that $H$ is a $(|K|,b)$-regular set in $\Cay(G,S)$, as statement (e) asserts. This completes the proof.
\end{proof}

\begin{corollary}\label{thm2}
Let $G$ be a group and let $H$ be a normal subgroup of $G$. If $H$ is a perfect code of $G$, then $H$ is an $(a,b)$-regular set of $G$ for every pair of integers $a$ and $b$ with $0\leqslant a\leqslant|H|-1$ and $0\leqslant b\leqslant |H|$ such that $\gcd(2,|H|-1)$ divides $a$.
\end{corollary}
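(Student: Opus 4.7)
The plan is a direct application of Construction~\ref{con1} and Theorem~\ref{thm3}. Since $H$ is a perfect code of $G$, there is an inverse-closed $S_0\subseteq G\setminus\{e\}$ such that $H$ is a perfect code in $\Cay(G,S_0)$, which is exactly the input Construction~\ref{con1} asks for, alongside an inverse-closed $K\subseteq H\setminus\{e\}$ and a nonnegative integer $b\leqslant|H|$. So the task reduces to exhibiting an inverse-closed subset $K$ of $H\setminus\{e\}$ of size $a$: once this is done, Theorem~\ref{thm3}(e) immediately yields that $H$ is a $(|K|,b)=(a,b)$-regular set in $\Cay(G,S)$ for the resulting $S$, finishing the proof.

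The existence of such a $K$ is exactly where the divisibility condition $\gcd(2,|H|-1)\mid a$ enters. Partition $H\setminus\{e\}$ into singletons formed by the involutions of $H$ and pairs $\{h,h^{-1}\}$ with $|h|>2$; every inverse-closed subset of $H\setminus\{e\}$ is a union of some of these blocks, so its size has the form $i+2j$ where $i$ involutions and $j$ pairs are used. If $|H|$ is odd, Lagrange forces the number of involutions in $H$ to be zero, so only even sizes are attainable; but then $\gcd(2,|H|-1)=2$ makes $a$ even, and $a/2\leqslant (|H|-1)/2$ pairs do the job. If $|H|$ is even, Cauchy's theorem produces at least one involution in $H$, and a short counting argument (using $0\leqslant a\leqslant|H|-1$) shows that every size in $\{0,1,\ldots,|H|-1\}$ is attainable.

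It remains to check that Construction~\ref{con1} is well-defined for our parameters. The only delicate point is the existence of the elements $v_{j,\ell}$ inside $s_jH$ for $j=2m+1,\ldots,n$. When $b>2\alpha_j$, one needs $b-2\alpha_j\leqslant\beta_j$, equivalently $b\leqslant 2\alpha_j+\beta_j=|H|$, which is a hypothesis. When $b\leqslant 2\alpha_j$ is odd, one needs $\beta_j\geqslant 1$, and this holds automatically because $s_j=s_j^{-1}\neq e$ is itself an involution lying in $s_jH$, so we may take $v_{j,1}=s_j$. With all the inputs of Construction~\ref{con1} now legitimate, Theorem~\ref{thm3}(e) delivers the conclusion. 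The only step requiring real care is the bookkeeping in the previous paragraph, matching the divisibility condition on $a$ to the achievable sizes of inverse-closed subsets of $H\setminus\{e\}$; the rest is a formal consequence of machinery already in place.
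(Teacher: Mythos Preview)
Your argument is correct and follows essentially the same route as the paper: reduce to finding an inverse-closed $K\subseteq H\setminus\{e\}$ of size $a$ via the parity analysis of $|H|$, then feed $K$, $b$ and $S_0$ into Construction~\ref{con1} and invoke Theorem~\ref{thm3}(e). Your extra paragraph verifying that the sets $T_j$ in Construction~\ref{con1} are well-defined (namely $b-2\alpha_j\leqslant\beta_j$ when $b>2\alpha_j$, and $\beta_j\geqslant1$ when $b\leqslant2\alpha_j$ is odd) spells out details the paper leaves implicit, but does not change the approach.
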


\begin{proof}
If $|H|$ is odd, then $H\setminus\{e\}$ is partitioned into pairs of elements that are inverses of each other, and so $H\setminus\{e\}$ has an inverse-closed subset of size $a$ for each even integer $0\leqslant a\leqslant|H|-1$. If $|H|$ is even, then there exists an involution in $H$, and so $H\setminus\{e\}$ has an inverse-closed subset of size $a$ for each integer $0\leqslant a\leqslant|H|-1$. Since by our assumption $H$ is a perfect code of $G$, we may take an inverse-closed subset $S_0$ of $G\setminus\{e\}$ such that $H$ is a perfect code in $\Cay(G,S_0)$. Let $a$ and $b$ be integers such that $0\leqslant a\leqslant|H|-1$, $0\leqslant b\leqslant |H|$ and $\gcd(2,|H|-1)$ divides $a$. Note that $a$ is even if $|H|$ is odd, as $\gcd(2,|H|-1)$ divides $a$. We conclude that there exists an inverse-closed subset $K$ of $H\setminus\{e\}$ with $|K|=a$. Now let $S$ be as in Construction~\ref{con1}. Then Theorem~\ref{thm3} ensures that $H$ is an $(a,b)$-regular set in $\Cay(G,S)$. This completes the proof.
\end{proof}

We are now in a position to prove Theorem~\ref{thm5}.

\begin{proof}[Proof of Theorem~$\ref{thm5}$]
The equivalence of (a1) and~(a2) has been proved in~\cite[Theorem~2.2]{HXZ2018}.
Corollary~\ref{thm2} shows that (a2) implies (a3). On the other hand, (a3) implies~(a2) since perfect codes are $(0,1)$-regular sets. Thus statements (a1), (a2) and (a3) are equivalent.

Again, the equivalence of (b1) and~(b2) has been proved in \cite[Theorem~2.2]{HXZ2018}. Suppose that~(b2) holds.  Then (a1) holds and $|H|$ is even. By the equivalence of~(a1), (a2) and (a3) as shown above, we then infer that~(a3) holds. As $|H|$ is even, we have $\gcd(2,|H|-1)=1$. Thus~(a3) leads to~(b3).
This shows that~(b2) implies~(b3). On the other hand, suppose that~(b3) holds.
Then in particular $H$ is a $(1,1)$-regular set of $G$. That is, $H$ is a total regular set of $G$.
Hence~(b3) implies~(b2). So (b2) and (b3) are equivalent. Therefore, statements~(b1), (b2) and (b3) are all equivalent, completing the proof.
\end{proof}

\section{Examples and remarks}
\label{sec:remarks}

We illustrate Construction~\ref{con1} by the following example. Recall that the generalized quaternion group $Q_{4n}$ of order $4n$ (where $n\geqslant 2$) is the group with presentation $\langle x,y\mid x^{2n} = e, y^2 = x^n, y^{-1}xy=x^{-1}\rangle$.
%

\begin{example}\label{exm1}
Let $G=\langle x,y\mid x^{10}=e,y^2=x^5,y^{-1}xy=x^{-1}\rangle$ be the generalized quaternion group of order $20$. Let $H=\langle x^2\rangle=\{e,x^2,x^{-2},x^4,x^{-4}\}$, $K=\{x^2,x^{-2}\}$ and $S_0=\{y,y^{-1},x^5\}$.
Then $H$ is a normal subgroup of $G$, $K$ is an inverse-closed subset of $H \setminus \{e\}$, and by Corollary~\ref{cor1}, $H$ is a perfect code in $\Cay(G,S_0)$. Using Construction~\ref{con1}, we now construct an inverse-closed subset $S$ of $G \setminus \{e\}$ such that $H$ is a $(2, 3)$-regular set in $\Cay(G,S)$.

Write $s_1=y$, $s_2=y^{-1}$ and $s_3=x^5$ so that $S_0=\{s_1,s_2,s_3\}$. By \eqref{equ3}, we have
\[
S_1=\{s_1,s_1^{-1}\}=\{y,y^{-1}\},
\]
\[
S_2=\{s_1x^2,(s_1x^2)^{-1}\}=\{x^8y,(x^8y)^{-1}\},
\]
and
\[
 S_3=\{s_1x^4,(s_1x^4)^{-1}\}=\{x^6y,(x^6y)^{-1}\}.
\]
Since $s_3H=x^5H=xH$, we have $s_3H=\{x,x^{-1},x^3,x^{-3},x^5\}$ and $T_3=\{x,x^{-1},x^5\}$. So \eqref{eq:SKT} yields
$$
S = K\cup(S_1\cup S_2)\cup T_3
= \{x^2,x^{-2},y,y^{-1},x^8y,(x^8y)^{-1},x^6y,(x^6y)^{-1},x,x^{-1},x^5\}.
$$
Finally, by Theorem~\ref{thm3}, $S$ is an inverse-closed subset of $G \setminus \{e\}$ and $H$ is a $(2,3)$-regular set in $\Cay(G,S)$.
\qed
\end{example}

It may happen that a normal subgroup of a group is an $(a,b)$-regular set of the group for some (but not all) pairs of integers $a$ and $b$ as in (a3) (respectively, (b3)) of Theorem ~\ref{thm3} but is not a perfect code (respectively, total perfect code) of the group. We illustrate this by the following two examples.

\begin{example}
Let $\Q_8=\{1,-1,i,-i,j,-j,k,-k\}$ be the quaternion group. By Lemma~\ref{lem1}, the normal subgroup $H=\langle i\rangle=\{1,-1,i,-i\}$ of $\Q_8$ is a $(1,2)$-regular set in $\Cay(\Q_8,\{-1,j,-j\})$ and a $(2,2)$-regular set in $\Cay(\Q_8,\{i,-i,j,-j\})$. However, using \cite[Theorem~2.2]{HXZ2018}, one can verify that $H$ is not a perfect code of $\Q_8$.
\end{example}

\begin{example}
Let $G=\langle x,y\mid x^8=e,y^2=x^4,y^{-1}xy=x^{-1}\rangle$ be the generalized quaternion group of order $16$. Then $H=\langle x^2\rangle$ is a normal subgroup of $G$. By Lemma~\ref{lem1}, we see that $H$ is a $(2,2)$-regular set in $\Cay(G,S)$, where $S = \{x^2,x^{-2},x,x^{-1},y,y^{-1},xy,(xy)^{-1}\}$. However, by~\cite[Theorem~2.2]{HXZ2018} we can show that $H$ is not a total perfect code of $G$.
\end{example}

\begin{remark}
Let $G$ be a group and $H$ a subgroup of $G$. The following statements are immediate corollaries of Lemma~\ref{lem1}:
\begin{itemize}
\item[\rm (a)] For every integer $a$ with $0 \leqslant a\leqslant |H|-1$ such that $\gcd(2,|H|-1)$ divides $a$, $H$ is an $(a,|H|)$-regular set of $G$;
\item[\rm (b)] if $H$ is an $(a,b)$-regular set of $G$ for some pair of integers $a$ and $b$, then it is also an $(a,|H|-b)$-regular set of $G$.
\end{itemize}
\end{remark}

In fact, we obtain (a) by replacing $S$ in Lemma~\ref{lem1} by the union of $G \setminus H$ and any inverse-closed subset of size $a$ in $H$. Similarly, letting $H$ be an $(a,b)$-regular set of $\Cay(G,S)$, we obtain (b) by replacing $S$ with $(S\cap H)\cup(G\setminus(S\cup H))$ in Lemma~\ref{lem1}.

\smallskip
\noindent\textsc{Acknowledgements.} We appreciate the four anonymous referees for their helpful comments. The first author gratefully acknowledges the financial support from China Scholarship Council (No.~201806010040). The third author was supported by the National Natural Science Foundation of China (No.~61771019) and the Research Grant Support Scheme of The University of Melbourne. The third author is grateful to Peter Cameron for introducing the concept of perfect sets to him and bringing \cite{BCGG2019} to his attention.

\smallskip
\noindent\textsc{Data availability statement.} Data sharing not applicable to this article as no datasets were generated or analysed during the current study.

\end{document}